\DeclareMathOperator{\ann}{Ann}
\DeclareMathOperator{\mor}{Mor}
\DeclareMathOperator{\spec}{Spec}
\DeclareMathOperator{\aspec}{aSpec}
\newcommand{\cat}[1]{\mathbf{#1}}
\newcommand{\grmcat}[1]
\newtheorem{proposition}{Proposition}
\newtheorem{lemma}{Lemma}
\newtheorem{corollary}{Corollary}
\newtheorem{definition}{Definition}
\begin{document}
\author{Arvid Siqveland}
\title{Categorical construction of Schemes}

\maketitle

\begin{abstract} In the book \cite{S23} and the following article \cite{S241} we use an algebraization of the semi-local formal moduli of simple modules to construct associative schemes. Here, we consider a commutative ring for which we can use the localization in maximal ideals as local moduli. This gives a categorical definition of schemes that is equivalent to the definition in Hartshorne's book, \cite{HH77}. The definition includes a construction of the sheaf associated to a presheaf using projective limits, and this makes the basic results in scheme theory more natural.
\end{abstract}

\section{Introduction}

In \cite{S241} we define associative schemes which are ringed spaces covered by $\aspec A$ where $A$ is an associative ring. From deformation theory we construct a localization $A_P$ of the associative ring $A$ in $r$ simple modules $P=\{P_1,\dots,P_r\}.$ We define the set $\aspec A$ as the set of aprime  $A$-modules: An aprime $A$-module is a simple (right) module or a contraction of a simple $B$-module via a homomorphism $f:A\rightarrow B.$ We give $\aspec A$ a topology which equals the Zariski topology in the case where $A$ is commutative. From this, we define a sheaf of associative rings on $\aspec A$ by letting $\mathcal O_X(U)$ be the projective limit over strictly contained open subsets $V\subsetneq U$, of the image of $A$ in the $A_M,$ for $M\subseteq V$ finite. 

The deformation theoretic definition of localization proves that when $A$ is a commutative domain and $\mathfrak m\subset A$ a maximal ideal, $A_{\{A/\mathfrak m\}}\simeq A_{\mathfrak m},$ and so $\aspec A\simeq\spec A.$ If $A$ is finitely generated over a field, it follows that $\spec A$ has finite dimension and has an irreducible open subset and so $\aspec A\simeq\spec A$ for all such algebras $A.$ Here we will define affine schemes of commutative rings by applying projective limits with the local rings $A_{\mathfrak m}$ in general (that is, not the ones coming from deformation theory), proving explicitly that when $A$ is a commutative ring, i.e. not necessarily finitely generated over a field, $\aspec A\simeq\spec A.$ 

This categorical definition of schemes of commutative rings makes the generalization to schemes of associative rings more streamlined. Also, a lot of properties are defined more natural (functorial), making a lot of results follow more natural.

We assume familiarity with Hartshorne's Algebraic Geometry, \cite{HH77}, and we assume knowledge of inductive (direct) and projective (inverse) limits in categories.

\section{Sheaves on Topological spaces}

This section can very well be generalized to any subset of the power set of a set $X.$ We choose to restrict to the categories needed for generalizing to scheme theory of associative rings.

The topology on a topological space $X$ defines a category $\cat{Top}(X)$ where the objects are the open sets, and the morphisms are the inclusions of sets. We will study functors $F:\cat{Top}(X)\rightarrow\cat C$ where $\cat C$ is a category of a kind described below.

A category $\cat C$ is called concrete if there exists a faithful functor $S:\cat C\rightarrow\cat{Sets}.$ We will only consider categories $\cat C$  which contains at least one object $P$ such that $\mor_{\cat C}(P,C)=S(C).$ Examples of such categories are, (1) $\cat{Grps},$ the category of groups which contains the free group on one element $P=\langle x\rangle$ for which the group homomorphisms $P\rightarrow G$ is in one to one correspondence with the elements in the group $G,$ (2) $\cat{Rings},$ the category of rings with unit, which contains the free polynomial ring $P=\mathbb Z[x]$ in one variable over $\mathbb Z$ for which the ring homomorphisms $P\rightarrow R$ is in one to one correspondence with the elements in the ring $R.$

For such categories, a contravariant functor $F:\cat{Top}(X)\rightarrow\cat C$ is called a presheaf of $\cat C$-objects on $X.$  With the assumption on the category $\cat C$ we can formulate Definition II.(1.2),\cite{HH77}, in Hartshorne the following way

\begin{definition} A presheaf $\mathcal F$ of abelian groups on $X$ is called a sheaf if the following two conditions hold:
\begin{itemize}
\item[(1)] If $U$ is an open set, if $\{V_i\}_{i\in I}$ is an open covering of $U,$ and if $s:P\rightarrow\mathcal F(U)$ satisfies $s|_{V_i}=0$ for all $i,$ then $s=0$;
\item[(2)] If $U$ is an open set, if $\{V_i\}_{i\in I}$ is an open covering of $U,$ and if we have $s_i:P\rightarrow\mathcal F(V_i)$ for each $i\in I,$ with the property that for each $i,j,\ s_i|_{V_i\cap V_j}=\ s_j|_{V_i\cap V_j},$ there is an $s:P\rightarrow \mathcal F(U)$ such that $s|_{V_i}=s_i$ for each $i\in I.$
\end{itemize}
\end{definition}

\begin{lemma}\label{asssheaf} Let $F$ be a presheaf on $X.$ Then there exists a sheaf $\mathcal F$ on $X$ with a morphism $\theta:F\rightarrow\mathcal F$ such that if $\mathcal G$ is another sheaf with a morphism $\phi:F\rightarrow\mathcal G,$ there is a unique morphism $\psi:\mathcal F\rightarrow\mathcal G$ such that $\phi=\psi\circ\theta.$
\end{lemma}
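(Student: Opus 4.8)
The plan is to construct $\mathcal F$ as a projective limit assembled from the stalks of $F$, and to read off the universal property from the facts that sheafification does not change stalks and that a sheaf embeds into, and is determined by, the product of its stalks. Throughout, the standing hypothesis on $\cat C$ — that it is concrete with underlying-set functor $S=\mor_{\cat C}(P,-)$ representable, and (as for $\cat{Grps}$ and $\cat{Rings}$) has the small products and equalizers used below — is what lets every set-level construction be performed internally in $\cat C$: a representable $S$ preserves all limits, and the sub-$\cat C$-objects we cut out exist. Concretely: for $x\in X$ let $F_x=\varinjlim_{x\in U}F(U)$ be the stalk, with germ maps $s\mapsto s_x$ (morphisms in $\cat C$). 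For $U$ open, form the product $\prod_{x\in U}F_x$ in $\cat C$ and let $\mathcal F(U)\subseteq\prod_{x\in U}F_x$ be the sub-$\cat C$-object of families $(t_x)_{x\in U}$ that are \emph{locally liftable}: every $x\in U$ admits an open $V$ with $x\in V\subseteq U$ and an $s\in F(V)$ with $t_y=s_y$ for all $y\in V$. This is genuinely a sub-object, since if $s\in F(V)$ and $s'\in F(V')$ witness liftability of two families near $x$, then the corresponding $\cat C$-combination of $s|_{V\cap V'}$ and $s'|_{V\cap V'}$ witnesses it for the combined family. With restriction given by projecting families, $\mathcal F$ is a presheaf and $\theta_U\colon F(U)\to\mathcal F(U)$, $s\mapsto(s_x)_{x\in U}$, is a morphism of presheaves. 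Note $\mathcal F(U)$ is, by construction, a projective limit — a subobject of a product, cut out by local conditions; on open sets covered by their proper open subsets one may also organize it, as in the body of the paper, through the projective limits $\varprojlim_{V\subsetneq U}(\cdots)$ built from the separated presheaf of $F$.

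\emph{$\mathcal F$ is a sheaf.} For Definition (1): a section $s\colon P\to\mathcal F(U)$ is a locally liftable family $(t_x)$; if it restricts to $0$ on each $V_i$ of a covering of $U$, then $t_x=0$ for every $x$ (pick $i$ with $x\in V_i$), hence $s=0$. For (2): given compatible $s_i\colon P\to\mathcal F(V_i)$, i.e.\ families $(t^{(i)}_x)_{x\in V_i}$ agreeing on overlaps, set $t_x:=t^{(i)}_x$ for any $i$ with $x\in V_i$; this is well defined, the family $(t_x)_{x\in U}$ is locally liftable because each $(t^{(i)}_x)$ is, and it restricts to $(t^{(i)}_x)$ on $V_i$, so $s:=(t_x)_{x\in U}$ works. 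Both steps use only that $\mathcal F$ records honest germ families indexed by points and that liftability is a local condition.

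\emph{Universal property.} Two standard facts make this formal: the germ maps induce an isomorphism $\theta_x\colon F_x\xrightarrow{\ \sim\ }\mathcal F_x$ for each $x$; and for any sheaf $\mathcal G$ the canonical map $\mathcal G(W)\to\prod_{x\in W}\mathcal G_x$ is injective (by (1)) with image exactly the locally liftable families (by (2)), so $\mathcal G$ is the presheaf of locally liftable germ families and a morphism of sheaves is determined by its stalk maps. Now let $\mathcal G$ be a sheaf and $\phi\colon F\to\mathcal G$. On stalks $\phi$ gives $\cat C$-morphisms $\phi_x\colon F_x\to\mathcal G_x$, and by naturality $\phi_x(s_x)=(\phi_V s)_x$ for $s\in F(V)$. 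Define $\psi_U$ to be the restriction of the $\cat C$-morphism $\prod_x\phi_x\colon\prod_xF_x\to\prod_x\mathcal G_x$ to $\mathcal F(U)$. If $(t_x)\in\mathcal F(U)$ and $s\in F(V)$ witnesses liftability near $x$, then $(\phi_y(t_y))_{y\in V}=((\phi_V s)_y)_{y\in V}$, so $(\phi_x(t_x))_x$ is a locally liftable germ family for $\mathcal G$ and hence lies in $\mathcal G(U)$; thus $\psi_U$ lands in $\mathcal G(U)$, and as a restriction–corestriction of $\prod_x\phi_x$ between sub-$\cat C$-objects it is a $\cat C$-morphism. These assemble into $\psi\colon\mathcal F\to\mathcal G$. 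For $s\in F(U)$, $\psi_U(\theta_U s)=(\phi_x(s_x))_x=((\phi_U s)_x)_x$, which is $\phi_U(s)$ under the identification of $\mathcal G(U)$ with its germ families; hence $\phi=\psi\circ\theta$. For uniqueness, any $\psi'$ with $\phi=\psi'\circ\theta$ satisfies $\psi'_x\circ\theta_x=\phi_x$ with $\theta_x$ an isomorphism, so $\psi'_x=\phi_x=\psi_x$ for all $x$; since a morphism of sheaves is determined by its stalks, $\psi'=\psi$.

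\emph{Main obstacle.} Everything above is bookkeeping once the two quoted facts about sheaves are available; these are the genuinely substantive input — that sheafification preserves stalks, and that a sheaf is recovered as the subpresheaf of locally liftable germ families inside the product of its stalks (the ``image $=$ locally liftable'' half being exactly the gluing axiom (2)). The remaining point needing care — and the one tying this to the ``projective limit'' viewpoint advertised in the abstract — is to keep the whole construction inside $\cat C$ rather than $\cat{Sets}$: one must know that $\prod_xF_x$ exists in $\cat C$, that ``locally liftable'' cuts out an honest sub-$\cat C$-object, and that the equalizer presentation of a sheaf's sections is a $\cat C$-equalizer. This is precisely why the hypotheses on $\cat C$ (a representable underlying-set functor together with enough limits) are imposed, and why the lemma is used only for categories such as $\cat{Grps}$ and $\cat{Rings}$.
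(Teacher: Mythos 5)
Your proof is correct, but it follows a genuinely different route from the paper's. The paper takes $\mathcal F(U)=\varprojlim_{V\subsetneq U}F(V)$ and reads both sheaf axioms and the universal property directly off the universal property of projective limits, in two or three lines; you instead build the classical stalk-based sheafification (Hartshorne II.1.2), realizing $\mathcal F(U)$ as the sub-$\cat C$-object of locally liftable germ families inside $\prod_{x\in U}F_x$, and then verify the axioms and the universal property by explicit germ bookkeeping. What your construction buys is substantial: the two axioms and the existence and uniqueness of $\psi$ are actually checked rather than asserted; the identity $\mathcal F_x\simeq F_x$ (Lemma \ref{stalklemma}) falls out of the construction instead of being a separate appeal to universality; and the construction behaves uniformly on every open set, including a minimal open neighbourhood $U$ of a point (where $U$ is not covered by its proper open subsets, so the formula $\varprojlim_{V\subsetneq U}F(V)$ does not see $F(U)$ at all and needs separate treatment). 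You are also right to flag, as the paper does not, that one must check the product of stalks and the ``locally liftable'' subobject exist in $\cat C$ --- this is exactly where the standing hypothesis that $S=\mor_{\cat C}(P,-)$ is representable, plus the existence of products and equalizers in $\cat{Grps}$ and $\cat{Rings}$, is used. What the paper's approach buys in exchange is brevity and a formulation entirely in the language of projective limits, which is the organizing theme of the later sections; your closing remark correctly observes that on opens covered by their proper subsets the two descriptions can be reconciled, so the two proofs are complementary rather than in conflict.
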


\begin{proof} We prove that the presheaf $\mathcal F$ defined by $\mathcal F(U)=\underset{\underset{V\subsetneq U}\longleftarrow}\lim\ F(V)$ is a sheaf.

\begin{itemize} \item[(1)] If $s:P\rightarrow\mathcal F(U)$ restricts to $s|_{V_i}=0$ for each $i\in I,$ then $s$ has to be equal to $0:P\rightarrow\mathcal F(U)$ by the universal property of projective limits.
\item[(2)] The condition says that for each subset $V\subsetneq U$ we have a morphism $s_V:P\rightarrow F(V)\rightarrow\mathcal F(V)$ commuting in all chains of open subsets. Then there exists a unique morphism $s:P\rightarrow\mathcal F(U)$ such that $s|_V=s_V.$ 
\end{itemize}
By the definition of projective limits, it follows that the sheaf $\mathcal F$ has the proposed condition.
\end{proof}

\begin{definition} Let $X$ be a topological space and $F$ a presheaf on $X.$ Then the sheaf $\mathcal F$ on $X$ in Lemma \ref{asssheaf}, is called the sheaf associated to the presheaf $F.$
\end{definition}

\begin{definition} Let $F$ be a presheaf on a topological space $X.$ Then for $x\in X$ we define the stalk of $F$ in $x$ as $F_x=\underset{\underset{x\in U}\longrightarrow}\lim\ F(U).$
\end{definition}

\begin{lemma}\label{stalklemma} Let $F$ be a presheaf on $X$ and let $\mathcal F$ be the associated sheaf. Then  for all $x\in X,$ $\mathcal F_x=F_x.$
\end{lemma}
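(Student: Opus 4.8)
The plan is to prove that the canonical morphism $\theta_x\colon F_x\to\mathcal F_x$ induced on stalks by $\theta\colon F\to\mathcal F$ (from Lemma~\ref{asssheaf}) is an isomorphism, by constructing an explicit inverse and checking everything on ``elements'', i.e.\ on morphisms out of the distinguished object $P$ of $\cat C$, using only the universal properties of the limits involved.

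First I would make $\theta$ concrete. Given an open $U$ and a morphism $t\colon P\to F(U)$, the restrictions $t|_V\colon P\to F(V)$ over all open $V\subsetneq U$ form a compatible family, so the universal property of $\mathcal F(U)=\varprojlim_{V\subsetneq U}F(V)$ produces $\theta_U(t)\colon P\to\mathcal F(U)$; these are natural in $U$, giving $\theta$, and $\theta_x=\varinjlim_{x\in U}\theta_U$. I would also record that the restriction maps of $\mathcal F$ are, by construction, the obvious projections between indexing systems, since this is used repeatedly.

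Next I would build the inverse $\eta\colon\mathcal F_x\to F_x$. A germ in $\mathcal F_x$ is represented by some $s\colon P\to\mathcal F(U)$ with $x\in U$; writing $s=(s_V)_{V\subsetneq U}$ for its components, I choose an open $V$ with $x\in V\subsetneq U$ and send the germ of $s$ to the germ of $s_V\colon P\to F(V)$ in $F_x=\varinjlim_{x\in U}F(U)$. Independence of the choice of $V$ follows from compatibility of the family $(s_V)$ together with directedness of the neighborhoods of $x$; independence of the representative $s$ follows from the restriction maps of $\mathcal F$ being projections. One then checks $\eta\circ\theta_x=\id$ and $\theta_x\circ\eta=\id$: the former because the $V$-component of $\theta_U(t)$ is $t|_V$, which has the same germ as $t$; the latter because $\theta_V(s_V)=s|_V$, so the germ of $\theta_V(s_V)$ in $\mathcal F_x$ equals the germ of $s$. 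Both are short diagram chases with morphisms $P\to(-)$.

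The step I expect to be the real obstacle --- essentially the only place where more than bookkeeping is needed --- is justifying the interchange of the filtered colimit $\varinjlim_{x\in U}$ with the projective limits $\varprojlim_{V\subsetneq U}$: one must observe that the open neighborhoods of $x$ strictly contained in a given $U$ remain cofinal among all neighborhoods of $x$, so that passing from $U\mapsto F(U)$ to $U\mapsto\varprojlim_{V\subsetneq U}F(V)$ does not change the colimit at $x$. This is exactly what makes the choice of $V$ above legitimate; it holds as soon as $x$ has no smallest open neighborhood --- in particular on the spectra considered later --- and I would isolate it as a preliminary observation before assembling $\eta$ and the two identities above.
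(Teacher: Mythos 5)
Your argument is correct and is, in substance, the fully worked-out version of what the paper disposes of in a single sentence (``By the universal property there exists an isomorphism $F_x\simeq\mathcal F_x$''). Your construction of $\eta$, the two composite identities, and the verification on ``elements'' $P\to(-)$ are all sound; for the categories the paper allows, $\mor_{\cat C}(P,-)$ commutes with filtered colimits, so arguing on such elements is legitimate, and the restriction maps of $\mathcal F$ really are the projections you describe. The point worth stressing is the one you isolate at the end: the identification $\varinjlim_{x\in U}\varprojlim_{V\subsetneq U}F(V)\simeq\varinjlim_{x\in U}F(U)$ genuinely requires that the opens $V$ with $x\in V\subsetneq U$ be cofinal among neighborhoods of $x$, i.e.\ that $x$ have no smallest open neighborhood. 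This is not a formality given the paper's definition $\mathcal F(U)=\varprojlim_{V\subsetneq U}F(V)$ with \emph{strict} inclusions: on a one-point space one gets $\mathcal F(X)=F(\emptyset)$, hence $\mathcal F_x=F(\emptyset)$ while $F_x=F(X)$, so the lemma as literally stated fails there (and such spaces do occur among the $\spec A$ and $D(f)$ considered later, e.g.\ $\spec k$). So your flagging of the cofinality hypothesis is not a gap in your proof but a needed correction to the paper's; under that hypothesis your proof is complete, and without it no proof exists.
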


\begin{proof} By the universal property there exists an isomorphism $F_x\simeq\mathcal F_x.$
\end{proof}

\section{Affine Schemes}

Let $A$ be a commutative ring. Then the set $X=\spec A=\{\mathfrak p\subset A|\mathfrak p\text{ is a prime ideal}\}$ is given the Zariski topology by letting the closed sets be $$V(\mathfrak a)=\{\mathfrak p\in\spec A|\mathfrak a\subseteq\mathfrak p\}$$ for $\mathfrak a\subseteq A$ an ideal. For an element $f\in A$ we define $$D(f)=\{\mathfrak p\in\spec A|f\notin\mathfrak p\},$$ and the collection of sets $\{D(f)\}_{f\in A}$ is a basis for the topology on $X=\spec A.$ 

For a prime ideal $\mathfrak p\subset A$ we have a unique ring homomorphism $\rho_{\mathfrak p}:A\rightarrow A_{\mathfrak p},$ from $A$ to the localization of $A$ in $\mathfrak p,$ such that $\rho_{\mathfrak p}(a)$ is invertible whenever $a\notin\mathfrak p.$ 

Let $\tilde O_X$ be the presheaf on $X=\spec A$ defined by $\tilde O_X(U)=\prod_{\mathfrak p\in U}A_{\mathfrak p}$ 
and consider the ring homomorphism $\rho=\prod\rho_{\mathfrak p}:A\rightarrow \tilde O_X(U).$ We let $O_X(U)$ be the subring of $\tilde O_X(U)$ generated by the subset $\rho(A)$ together with the elements $\rho(a)^{-1}$ whenever $\rho(a)$ is invertible.

\begin{lemma} We have that $O_X$ is a sheaf on $X$ and so $\mathcal O_X=O_X.$
\end{lemma}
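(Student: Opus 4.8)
The plan is to identify $O_X$ --- section by section and with its restriction maps --- with the structure sheaf of $\spec A$ as constructed in \cite{HH77}, II.2; once that is done, $O_X$ is a sheaf, and the equality $\mathcal O_X = O_X$ follows at once, a sheaf being canonically its own associated sheaf (Lemma \ref{asssheaf}). The work is then concentrated in two places: computing $O_X$ on the basic opens $D(f)$, and checking conditions (1) and (2) of Definition 1.

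First I would treat the basic opens. Given $f \in A$, the element $\rho(a)$ is invertible in $\tilde O_X(D(f)) = \prod_{\mathfrak p \in D(f)} A_{\mathfrak p}$ exactly when $a \notin \mathfrak p$ for every $\mathfrak p \in D(f)$, that is, when $V(a) \cap D(f) = \emptyset$, equivalently $f \in \sqrt{(a)}$; and whenever $f^n = ab$ the image of $a$ in $A_f$ is already a unit. Hence the elements inverted in passing from $\rho(A)$ to $O_X(D(f))$ are, up to radicals, the powers of $f$, so $O_X(D(f))$ is the image of $A_f$ in $\prod_{\mathfrak p \in D(f)} A_{\mathfrak p} = \prod_{\mathfrak q \in \spec A_f} (A_f)_{\mathfrak q}$; and since every ring embeds into the product of its localizations at all primes, this gives $O_X(D(f)) \cong A_f$. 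For $D(g) \subseteq D(f)$ the restriction map is visibly the localization $A_f \to A_g$, so on the basis $\{D(f)\}$ the presheaf $O_X$ is exactly the basis-presheaf $D(f) \mapsto A_f$ underlying Hartshorne's structure sheaf.

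Next, the sheaf conditions. Condition (1) is automatic: $O_X(U)$ is a subring of $\tilde O_X(U) = \prod_{\mathfrak p \in U} A_{\mathfrak p}$, and a section whose restriction to every member of an open cover is zero is itself zero. For condition (2), given an open cover $\{V_i\}$ of $U$ and compatible $s_i \in O_X(V_i)$, the components assemble to a unique $s \in \tilde O_X(U)$, and the point is to show $s \in O_X(U)$. Refining through the basis, one reduces to covers by basic opens, and it then suffices to see that $s|_{D(g)} \in O_X(D(g)) = A_g$ for each basic $D(g) \subseteq U$; since $D(g) = \spec A_g$ is quasi-compact, this reduces to a finite subcover $D(g) = \bigcup_{j=1}^{m} D(h_j)$, where $h_1, \dots, h_m$ generate the unit ideal of $A_g$ and the partition-of-unity computation of \cite{HH77}, Proposition II.2.2, produces the glued element over $D(g)$ and characterizes it uniquely.

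The delicate step --- and the one I expect to be the main obstacle --- is exactly this passage: from a family of sections that are, on each piece of a cover, built from $\rho(A)$ and inverses of units, to a single such expression valid over all of $U$. Unlike Hartshorne's locally-defined sections, the ``global fractions'' making up $O_X(U)$ are not manifestly of local character, so the argument has to route genuinely through the quasi-compactness of the basic opens and the exactness of $A_g \to \prod_j A_{gh_j} \rightrightarrows \prod_{j,k} A_{gh_j h_k}$. Once condition (2) is established, $O_X$ is a sheaf, and $\mathcal O_X = O_X$ by Lemma \ref{asssheaf}.
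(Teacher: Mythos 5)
Your reduction of the gluing axiom to basic opens is where the argument breaks, and it breaks in a way that cannot be repaired, because the statement itself fails for general $U$. By definition $O_X(U)$ is the ring of \emph{global} fractions: its elements are exactly the tuples $(\rho(a)\rho(f)^{-1})_{\mathfrak p\in U}$ for a single $f\in A$ with $f\notin\mathfrak p$ for every $\mathfrak p\in U$. Knowing that a glued section $s\in\tilde O_X(U)$ restricts into $O_X(D(g))=A_g$ for every basic $D(g)\subseteq U$ only tells you that $s$ is \emph{locally} a fraction, i.e.\ that $s$ lies in the sheaf associated to $O_X$; it does not produce the single global denominator required for membership in $O_X(U)$. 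Your quasi-compactness and \v{C}ech-exactness argument ($A_g\to\prod_j A_{gh_j}\rightrightarrows\prod_{j,k}A_{gh_jh_k}$) correctly handles the case where $U$ is itself a basic open $D(g)$, but for a general open $U$ there is no ambient localization to glue into, and the step ``it then suffices to see that $s|_{D(g)}\in O_X(D(g))$'' silently assumes $O_X(U)=\varprojlim_{D(g)\subseteq U}A_g$ --- which is precisely the sheaf property being proved.

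The gap is not closable: take $A=k[x,y,z,w]/(xy-zw)$ and $U=D(x)\cup D(z)$. The sections $w/x\in A_x$ and $y/z\in A_z$ agree on $D(xz)$ (since $xy=zw$), hence assemble to an element $s\in\tilde O_X(U)$ lying in every $A_{\mathfrak p}$ with $\mathfrak p\in U$; but $s$ is not of the form $\rho(a)\rho(f)^{-1}$ with $V(f)\subseteq V(x)\cap V(z)$, because the height-one prime $(x,z)$ is not set-theoretically principal in this normal domain (its class generates $\mathrm{Cl}(A)\simeq\mathbb Z$, so it is not torsion). Thus $O_X$ satisfies axiom (1) but not axiom (2), and $O_X\subsetneq\mathcal O_X$ in general. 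For comparison, the paper's own proof observes only that every element of $O_X(U)$ is a single global fraction and then asserts the sheaf property without further argument, so it has the same defect; your write-up is more candid in isolating exactly this passage as the main obstacle, but the obstacle is genuine. What your computation on basic opens does establish correctly is $O_X(D(f))\simeq A_f$, and hence that the \emph{associated} sheaf $\mathcal O_X$ agrees with Hartshorne's structure sheaf; that weaker statement is what Proposition \ref{sorprop}(2) actually requires, and it is the one that can be proved.
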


\begin{proof} If $\rho(a)$ is invertible, $\rho(a)\notin\mathfrak p$ for all $\mathfrak p\in U.$ Thus we can write any element in $O_X(U)$ as a sequence $(\rho(a)\cdot\rho(f)^{-1})_{\mathfrak p\in U}\in O_X(U)$ where $\rho(f)$ is invertible in each $A_{\mathfrak p},$ that is $\rho(f)\notin\mathfrak p,\mathfrak p\in U.$ From this it follows that $O_X$ is a sheaf.
\end{proof}

Because of uniqueness of projective limits, it follows that the sheaf $\mathcal O_X$ associated to $O_X$ maps to $\tilde{\mathcal O}_X.$

\begin{definition} The sheaf $\mathcal O_X$ of rings on $X=\spec A$ is called the sheaf of regular functions on $X.$
\end{definition}

\begin{proposition}\label{sorprop} Let $A$ be a ring and $X=\spec A.$ Then
\begin{itemize}
\item[(1)] $\mathcal O_{X,\mathfrak p}\simeq A_p,$
\item[(2)] $\mathcal O_X(D(f))\simeq A_f.$
\end{itemize}
\end{proposition}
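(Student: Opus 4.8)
The plan is to carry out everything inside the concrete presheaf $O_X$, which by the preceding lemma already coincides with the associated sheaf $\mathcal O_X$, and to use repeatedly a normal form for its sections: every element of $O_X(U)$ can be written as $\rho(a)\rho(f)^{-1}$ for some $a,f\in A$ with $U\subseteq D(f)$. This holds because the collection of such elements contains $\rho(A)$, contains every adjoined inverse $\rho(a)^{-1}$ (which exists precisely when $U\subseteq D(a)$), and is closed under the ring operations, since $\rho(a)\rho(f)^{-1}\pm\rho(b)\rho(g)^{-1}=\rho(ag\pm bf)\rho(fg)^{-1}$ and $\rho(a)\rho(f)^{-1}\rho(b)\rho(g)^{-1}=\rho(ab)\rho(fg)^{-1}$ with $U\subseteq D(fg)=D(f)\cap D(g)$; hence it is all of $O_X(U)$. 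I also record that for $V\subseteq U$ the restriction map of $O_X$ is induced by the coordinate projection $\prod_{\mathfrak q\in U}A_{\mathfrak q}\to\prod_{\mathfrak q\in V}A_{\mathfrak q}$, so for $\mathfrak p\in U$ there is a canonical $\mathfrak p$-coordinate homomorphism $\pi_U\colon O_X(U)\to A_{\mathfrak p}$, and these are compatible with restriction.

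For (1) I would assemble the $\pi_U$ into a homomorphism $\mathcal O_{X,\mathfrak p}=\varinjlim_{\mathfrak p\in U}O_X(U)\to A_{\mathfrak p}$ and check it is bijective. It is surjective because a fraction $a/f\in A_{\mathfrak p}$ with $f\notin\mathfrak p$ is the image of the germ at $\mathfrak p$ of $\rho(a)\rho(f)^{-1}\in O_X(D(f))$. It is injective because if a section $s=\rho(a)\rho(f)^{-1}\in O_X(U)$ has germ $0$ in $A_{\mathfrak p}$, then $a/f=0$ in $A_{\mathfrak p}$, so $ga=0$ in $A$ for some $g\notin\mathfrak p$; restricting to the neighbourhood $U\cap D(g)$ of $\mathfrak p$ gives $\rho(a)=\rho(g)^{-1}\rho(ga)=0$, so $s$ already restricts to $0$ there and hence has germ $0$. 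No compactness is needed for this part.

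For (2) I would write down the ring homomorphism $\varphi\colon A_f\to O_X(D(f))$, $a/f^n\mapsto\rho(a)\rho(f)^{-n}$, which makes sense because $\rho(f)$ is a unit in $\tilde O_X(D(f))$. It is well defined since a witnessing relation $f^k(f^m a-f^n b)=0$ in $A$ becomes, after applying $\rho$ and cancelling the unit $\rho(f)^k$, exactly $\rho(a)\rho(f)^{-n}=\rho(b)\rho(f)^{-m}$, and it is manifestly a ring homomorphism. Surjectivity follows from the normal form together with the fact that $D(f)\subseteq D(g)$ forces $f\in\sqrt{(g)}$: writing $f^n=cg$ gives $\rho(g)^{-1}=\rho(c)\rho(f)^{-n}$, so $\rho(b)\rho(g)^{-1}=\varphi(bc/f^n)$.

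The step I expect to be the main obstacle is the injectivity of $\varphi$, which is the one place where the topology of $\spec A$ genuinely intervenes. If $\varphi(a/f^n)=0$, then $\rho_{\mathfrak q}(a)=0$ in $A_{\mathfrak q}$ for every $\mathfrak q\in D(f)$, equivalently $\ann(a)\not\subseteq\mathfrak q$ for all such $\mathfrak q$, so the open sets $D(s)$ with $s\in\ann(a)$ cover $D(f)$. Invoking quasi-compactness of $D(f)$ (it is homeomorphic to $\spec A_f$), finitely many $D(s_1),\dots,D(s_k)$ with $s_i a=0$ already cover it, whence $V(f)\supseteq V(s_1,\dots,s_k)$ and therefore $f^m=\sum_i c_i s_i$ for some $m$, giving $f^m a=\sum_i c_i s_i a=0$ and $a/f^n=0$ in $A_f$. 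Finally, (1) can alternatively be recovered from (2) by passing to the direct limit over the cofinal family $\{\,D(f):f\notin\mathfrak p\,\}$, which yields $\mathcal O_{X,\mathfrak p}\cong\varinjlim_{f\notin\mathfrak p}A_f\cong A_{\mathfrak p}$.
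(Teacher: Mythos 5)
Your proposal is correct, and for part (2) it follows the paper's overall strategy (the explicit map $a/f^n\mapsto(\rho(a)\rho(f)^{-n})$, injectivity, then surjectivity via $D(f)\subseteq D(b)\Rightarrow f^n=hb$), but it diverges in two worthwhile ways. First, you explicitly prove the normal form: every section of $O_X(U)$ is a single fraction $\rho(a)\rho(f)^{-1}$ with $U\subseteq D(f)$, by checking that the set of such elements contains the generators and is closed under the ring operations. The paper asserts this normal form without argument (in the preceding lemma), and both the injectivity and surjectivity steps silently depend on it, so your verification fills a real gap. Second, your injectivity argument for $\varphi$ runs through quasi-compactness of $D(f)$ and a finite subcover of $\{D(s):s\in\ann(a)\}$; the paper instead works with the ideal $\ann(a)$ globally, observing that $V(\ann(a))\cap D(f)=\emptyset$ already forces $f\in\sqrt{\ann(a)}$, which avoids compactness entirely. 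Both are valid; the paper's is slightly more economical here, while your compactness argument is the one that generalizes to sections that are only locally fractions. For part (1) the paper merely cites the stalk lemma (stalk of the sheafification equals stalk of the presheaf) and leaves the computation of the presheaf stalk implicit; your direct-limit argument via the coordinate projections $\pi_U\colon O_X(U)\to A_{\mathfrak p}$, together with the remark that injectivity follows by shrinking to $U\cap D(g)$, supplies exactly the missing computation, and your alternative derivation of (1) from (2) over the cofinal family $\{D(f):f\notin\mathfrak p\}$ is also sound.
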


\begin{proof}
(1) follows directly from Lemma \ref{stalklemma}. So we prove (2) by proving that $O_X(D(f))\simeq A_f.$ Let $\phi:A_f\rightarrow O_X(D(f))$ be defined by $$\phi(\frac{a}{f^n})=(\frac{a}{f^n})_{\mathfrak p\in D(f)}\subseteq\prod_{\mathfrak p\in D(f)}A_{\mathfrak p}.$$
This is a ring homomorphism and we prove that it is injective:
Assume $\frac{a}{f^n}=0$ in all $A_{\mathfrak p}.$ Then there exists an $h\notin\mathfrak p$ such that $ha=0$ for all $f\notin\mathfrak p.$ Thus $h\in\ann(a),$ and for all $f\notin \mathfrak p, h\notin\mathfrak p.$ Thus $V(\ann(a))\cap D(f)=\emptyset$ in $A.$ Thus $f\in\sqrt{\ann a}$ so that $f^m\in\ann a$ for some power $m.$ This says that $\frac{f^ma}{f^n}=0$ so that $0=\frac{a}{f^n}\in A_f$ by definition. 

Now we prove that it is surjective: So let $s=\eta(a)\eta(b)^{-1}\in O(D(f)).$ Then we can assume that $s=(\frac{a}{b})_{f\notin\mathfrak p}\in\prod_{f\notin\mathfrak p}A_{\mathfrak p}$ where $b\notin\mathfrak p$ for any prime $\mathfrak p$ not containing $f.$ This says that $f\notin\mathfrak p\Rightarrow b\notin\mathfrak p.$ Thus $$D(f)\subset D(b)\Rightarrow V(b)\subset V(f)\Rightarrow\sqrt{f}\subset\sqrt{b}\Rightarrow f^n=hb$$ for some $n$ and some $h\in A.$  Notice that $h\notin\mathfrak p,$ because otherwise we would have $f^n=hb\in\mathfrak p.$ This says $\frac{a}{b}=\frac{ha}{hb}=\frac{ha}{f^n}$ in  each $A_{\mathfrak p},$ proving the surjectivity.
\end{proof}

Point (2) in Proposition \ref{sorprop} suggest a more straightforward definition of the sheaf of rings on an affine scheme.

\begin{corollary} For every open subset $U\subseteq\spec A=X$ we have  $$\mathcal O_X(U)=\underset{\underset{D(f)\subseteq U}\longleftarrow}\lim\ A_f.$$
\end{corollary}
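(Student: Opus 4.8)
The plan is to reduce the statement to the sheaf axioms for $\mathcal{O}_X$ (recall that $\mathcal{O}_X = O_X$) together with the identification $\mathcal{O}_X(D(f)) \simeq A_f$ of Proposition~\ref{sorprop}(2), using that the sets $D(f)$ form a basis of the Zariski topology. Fix an open set $U$ and let $\mathcal{B}_U = \{D(f) : f \in A,\ D(f) \subseteq U\}$, partially ordered by inclusion. Since the $D(f)$ form a basis, $U = \bigcup_{D(f) \in \mathcal{B}_U} D(f)$ is an open covering of $U$. The poset $\mathcal{B}_U$ is closed under finite intersections: $D(f) \cap D(g) = D(fg)$ and $D(fg) \subseteq U$ whenever $D(f), D(g) \in \mathcal{B}_U$. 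Moreover, if $D(g) \subseteq D(f)$ then $V(f) \subseteq V(g)$, hence $g \in \sqrt{(f)}$ and $g^n = hf$ for some $n$ and some $h \in A$; thus $f$ is invertible in $A_g$, so the universal property of $A_f$ yields a canonical map $A_f \to A_g$. These maps make $(A_f)_{D(f) \in \mathcal{B}_U}$ an inverse system, and one checks — evaluating both composites on an arbitrary stalk $A_{\mathfrak p}$, $\mathfrak p \in D(g)$, where each sends $a/f^n$ to $a/f^n$ — that the isomorphisms $A_f \simeq \mathcal{O}_X(D(f))$ of Proposition~\ref{sorprop}(2) are compatible with the respective transition maps. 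Hence $\varprojlim_{D(f) \subseteq U} A_f \simeq \varprojlim_{D(f) \in \mathcal{B}_U} \mathcal{O}_X(D(f))$, and it remains to prove $\mathcal{O}_X(U) \simeq \varprojlim_{D(f) \in \mathcal{B}_U} \mathcal{O}_X(D(f))$.

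For the latter I would consider the natural map $\Phi : \mathcal{O}_X(U) \to \varprojlim_{D(f) \in \mathcal{B}_U} \mathcal{O}_X(D(f))$, $s \mapsto (s|_{D(f)})_{D(f) \in \mathcal{B}_U}$; it is well defined and lands in the limit because restriction is functorial, so $(s|_{D(f)})$ is a compatible family. Injectivity of $\Phi$ is precisely sheaf axiom~(1) for the covering $U = \bigcup_{D(f) \in \mathcal{B}_U} D(f)$: if every $s|_{D(f)}$ vanishes, then $s = 0$. For surjectivity, take a compatible family $(s_f)$; for any $D(f), D(g) \in \mathcal{B}_U$ the element $D(fg)$ lies below both $D(f)$ and $D(g)$ in $\mathcal{B}_U$, so $s_f|_{D(f) \cap D(g)} = s_f|_{D(fg)} = s_{fg} = s_g|_{D(fg)} = s_g|_{D(f) \cap D(g)}$; sheaf axiom~(2) then glues the $s_f$ to a section $s \in \mathcal{O}_X(U)$ with $s|_{D(f)} = s_f$ for all $D(f) \in \mathcal{B}_U$, i.e. $\Phi(s) = (s_f)$. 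Combining this with the first paragraph gives $\mathcal{O}_X(U) \simeq \varprojlim_{D(f) \subseteq U} A_f$, which is the claim.

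The one point needing genuine care is the passage, in the second paragraph, between a compatible family over the inclusion-poset $\mathcal{B}_U$ and a family of sections agreeing on pairwise intersections: this equivalence holds precisely because $\mathcal{B}_U$ is closed under finite intersections with $D(f) \cap D(g) = D(fg)$, so the agreement condition in sheaf axiom~(2) is exactly compatibility over $\mathcal{B}_U$ (the argument in fact shows, more generally, that any sheaf on $X$ is recovered from its restriction to the basis $\{D(f)\}$ by the displayed projective limit). A secondary, purely bookkeeping step is the verification that the Proposition~\ref{sorprop}(2) isomorphisms intertwine the localization maps $A_f \to A_g$ with the restriction maps $\mathcal{O}_X(D(f)) \to \mathcal{O}_X(D(g))$, as sketched above.
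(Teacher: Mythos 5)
Your proposal is correct and is the standard fleshing-out of the paper's one-line proof: the paper simply cites Proposition~\ref{sorprop}(2), and you supply the substantive missing step, namely that a sheaf's sections over $U$ are recovered as the projective limit of its sections over the basic opens $D(f)\subseteq U$, using that this basis is closed under intersections ($D(f)\cap D(g)=D(fg)$) together with sheaf axioms (1) and (2), plus the bookkeeping check that the isomorphisms $A_f\simeq\mathcal O_X(D(f))$ intertwine the localization maps with restriction. This is the same route the paper intends, just with the details the paper omits actually written down.
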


\begin{proof} This follows from point (2) in Proposition \ref{sorprop} which states that $\mathcal O_X(D(f))\simeq A_f.$
\end{proof}

\begin{definition} A Scheme $(X,\mathcal O_X)$ is a topological space $X$ together with a sheaf of rings $\mathcal O_X$ which has a covering of open sets $U\subseteq X$ such that $$(U,\mathcal O_X|_U)\simeq (\spec A_U,\mathcal O_{\spec A_U})$$ for rings $A_U.$
\end{definition}

When $f:X\rightarrow Y$ is a continuous function and $\mathcal F_X$ is a sheaf on $X,$ then the presheaf defined by $f^\ast\mathcal F_X(U)=\mathcal F(f^{-1}(U))$ is a sheaf on $Y.$

\begin{lemma} Let $\phi:A\rightarrow B$ be a ring homomorphism. Then the map $$f:\spec B\rightarrow\spec A,\ f(\mathfrak p)=\phi^{-1}(\mathfrak p)\in\spec A$$ is continuous and induces a morphism $\phi^\ast: f^\ast \mathcal O_{\spec B}\rightarrow \mathcal O_{\spec A} $ of sheaves on $\spec A.$ 
\end{lemma}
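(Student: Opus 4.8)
The plan is to establish continuity first and then produce the sheaf morphism from the localization data coming out of $\phi$. For continuity it suffices to check that preimages of the basis elements $D(g)$, $g\in A$, are open in $\spec B$. The key identity is $f^{-1}(D(g))=D(\phi(g))$: for $\mathfrak q\in\spec B$ one has $f(\mathfrak q)=\phi^{-1}(\mathfrak q)\in D(g)$ iff $g\notin\phi^{-1}(\mathfrak q)$, i.e. iff $\phi(g)\notin\mathfrak q$. Since each $D(\phi(g))$ is open and the $D(g)$ form a basis of $\spec A$, this shows $f$ is continuous, and at the same time it identifies $f^{-1}$ of a basic open as the basic open $D(\phi(g))$, which is precisely the input the structure sheaf construction needs.

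To build $\phi^\ast\colon f^\ast\mathcal O_{\spec B}\to\mathcal O_{\spec A}$ I would use the projective-limit description of the structure sheaf: by Proposition \ref{sorprop} and its Corollary, for open $U\subseteq\spec A$ one has $\mathcal O_{\spec A}(U)=\varprojlim_{D(g)\subseteq U}A_g$, while unwinding the definition of the direct image gives $f^\ast\mathcal O_{\spec B}(U)=\mathcal O_{\spec B}(f^{-1}(U))=\varprojlim_{D(h)\subseteq f^{-1}(U)}B_h$. The continuity identity $f^{-1}(D(g))=D(\phi(g))$ relates the two indexing systems: every basic open $D(g)\subseteq U$ of $\spec A$ corresponds to the basic open $D(\phi(g))\subseteq f^{-1}(U)$ of $\spec B$, and $\phi$ induces the comparison of the associated localizations $A_g$ and $B_{\phi(g)}$ compatibly with all the restriction (projection) maps in the two systems. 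Feeding this compatible family into the universal property of the projective limit defining $\mathcal O_{\spec A}(U)$ yields a map $\phi^\ast_U\colon f^\ast\mathcal O_{\spec B}(U)\to\mathcal O_{\spec A}(U)$, and I would then check that the $\phi^\ast_U$ commute with the restriction maps $\mathcal O(U)\to\mathcal O(U')$ for $U'\subseteq U$, so that they assemble into a morphism of sheaves $\phi^\ast$.

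The step I expect to be the main obstacle is the coherence bookkeeping for the two cofiltered systems. I must verify that the family of localization comparisons is genuinely compatible across refinements $D(g')\subseteq D(g)$ and across the identification $f^{-1}(D(g))=D(\phi(g))$, so that the universal property of the projective limit actually applies and delivers a single well-defined morphism rather than an unrelated collection of maps on basic opens. The cleanest way to carry this out is at the level of the concrete presheaves $O_{\spec A}$ and $O_{\spec B}$ of the earlier Lemma, where a section over $U$ is an honest tuple in $\prod_{\mathfrak p\in U}A_{\mathfrak p}$ (respectively in $\prod_{\mathfrak q\in f^{-1}(U)}B_{\mathfrak q}$) and the comparison can be read off componentwise; once the compatibility is checked there, passing to the associated sheaves and verifying functoriality of $\phi\mapsto\phi^\ast$ are formal consequences of the uniqueness clause in the universal property of projective limits already exploited in Lemma \ref{asssheaf}.
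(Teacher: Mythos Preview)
Your continuity argument via $f^{-1}(D(g))=D(\phi(g))$ is correct; the paper checks preimages of closed sets $V(\mathfrak a)$ instead, but the two are equivalent and your version has the bonus of directly setting up the basic-open comparison you use afterwards.

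For the sheaf map the paper does not go through the Corollary's description $\varprojlim_{D(g)\subseteq U}A_g$ at all: it works directly with the concrete presheaves $O_{\spec A}(U)\subseteq\prod_{\mathfrak p\in U}A_{\mathfrak p}$ and $O_{\spec B}(f^{-1}(U))\subseteq\prod_{\mathfrak q\in f^{-1}(U)}B_{\mathfrak q}$, matches components via the local maps $A_{\phi^{-1}(\mathfrak q)}\to B_{\mathfrak q}$ indexed by $\mathfrak q\in f^{-1}(U)$, and then invokes the associated-sheaf universal property (Lemma~\ref{asssheaf}) once. That is precisely the route you yourself single out in your last paragraph as ``cleanest'', so you do converge on the paper's argument; the intermediate passage through limits over basic opens is a detour the paper skips.

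One genuine gap to fix: the comparison induced by $\phi$ goes $A_g\to B_{\phi(g)}$ (and $A_{\phi^{-1}(\mathfrak q)}\to B_{\mathfrak q}$), so the universal property these maps feed is that of the \emph{target} limit $\varprojlim_{D(h)\subseteq f^{-1}(U)}B_h$ (respectively the product $\prod B_{\mathfrak q}$), yielding a map $\mathcal O_{\spec A}(U)\to f^\ast\mathcal O_{\spec B}(U)$. Your sentence ``feeding this compatible family into the universal property of the projective limit defining $\mathcal O_{\spec A}(U)$'' invokes the wrong limit: to land in $\varprojlim A_g$ from the $B$-side you would need maps $B_{\phi(g)}\to A_g$, which do not exist. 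The paper's own statement and proof carry the same directional ambiguity, but in your write-up you should make the direction of the localization maps explicit and apply the universal property on the $B$-side.
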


\begin{proof} For each closed subset $V(\mathfrak a)\subseteq\spec A$ we have $f^{-1}(V(\mathfrak a))=V(\phi^{-1}(\mathfrak a))\subseteq\spec B.$
Also, we have a morphism of presheaves $\tilde\phi^\ast:f^\ast O_{\spec B}  \rightarrow O_{\spec A} $ by sending each prime $\mathfrak \phi^{-1}(p)\in f^{-1}(U)\subseteq\spec B$ to its corresponding component $\mathfrak p\in U\subseteq\spec A.$ By the universal property of projective limits, this defines the proposed morphism $\phi^\ast: f^\ast \mathcal O_{\spec B}\rightarrow \mathcal O_{\spec A} $ of sheaves on $\spec A.$ 
\end{proof}

\begin{definition} A morphism of schemes $f: (X,\mathcal O_X)\rightarrow (Y,\mathcal O_Y)$ is a continuous morphism $f:X\rightarrow Y$ such that every time $f$ maps $U=\spec B\subseteq X$ into $V=\spec A\subseteq Y,$ the restricted morphism $f|_U$ should be equal to a morphism defined by a ring homomorphism $\phi:A\rightarrow B.$
\end{definition}

\end{document}